\newcommand{\bbR}{\mathbb R}
\newcommand{\bbN}{\mathbb N}
\renewcommand{\leq}{\leqslant}
\newtheorem{thm}{Theorem}
\newtheorem{lem}[thm]{Lemma}
\newtheorem{pro}[thm]{Proposition}
\date{}
\title{Center of distances and Bernstein sets}
\author[M. Kula]{Mateusz Kula}
\address{Institute of Mathematics, University of Silesia in Katowice, Bankowa 14, 40-007 Katowice}
\email{mateusz.kula@us.edu.pl}
\begin{document}
\begin{abstract}
	We show that for any subset $A\subset [0,\infty)$, where $0\in A$, there exists a Bernstein set $X\subset \bbR$ such that $A$ is the center of distances of $X$.
\end{abstract}
\maketitle
\section{Introduction}
The notion of center of distances has been introduced in \cite{bpw} as follows. If $(X,d)$ is a metric space, then the set
$$S(X):=\{a\in[0,\infty)\colon \forall_{x\in X}\exists_{y\in X}d(x,y)=a\}$$
 is called the \textit{center of distances} of $X$.
Although this notion is quite general, most applications concern subsets of real numbers $\bbR$.
 For example, the center of distances has been successfully used in proofs that some sets are not achievement sets of any series, see \cite{bpw} or \cite{bgm}. Exact calculation of the center of distance is sometimes complicated, for example for central Cantor sets, see \cite{bbfpw} and \cite{Banakiewicz}.  
 
 In \cite{bfhlpw}, the question to characterize sets $S(X)$ for subsets of the real line has been raised. At the conferences  `Inspirations in Real Analysis II' (2024) and `46th Summer Symposium in Real Analysis' (2024), M. Filipczak asked the following.

\noindent \textbf{Question.} Given a subset $A\subset [0,\infty)$ with $0\in A$, does there exist $X\subset \bbR$ such that $S(X)=A$?

We give a positive answer to Filipczak's question, see Theorem \ref{thm1}, and improve this result showing that in addition $X$ can be chosen to be a Bernstein set, see Theorem \ref{thm2}. It is worth noting that the answer to Filipczak's question was already known for compact $A$ and in that case $X$ can be chosen to be closed, see \cite[Corollary 4.11]{bfhlpw}.

We use the standard notation. Throughout this paper $2^{\leq n}$, $2^{<n}$ and $2^n$ denote the set of all finite binary sequences of length $\leq n$, $<n$ and $=n$, respectively.
If $s=(s_1,\ldots,s_n)$ and $t=(t_1,\ldots,t_m)$ are finite sequences, then, as usual, $s^\frown t$ is the concatenation $(s_1,\ldots, s_n, t_1,\ldots, t_m)$.  Also, $\operatorname{lin}_{\mathbb Q}(X)$ denotes the linear span of $X$ over the rationals $\mathbb Q$.
\section{On the average operator}

Given a subset $C\subset \bbR$, let $T_C\colon \mathcal P(\mathbb R)\rightarrow\mathcal P(\bbR)$ be the operator given by the formula
$$T_C(Y):=\left\{\frac12(y+y')\colon y,y'\in Y\right\} \setminus C, \text{ where } Y\subset \bbR.$$ 
Let $$T^n_C(Y):=\underset{n \text{ times}}{\underbrace{(T_C\circ\dots\circ T_C)}}(Y)\, \text{ and }\, T^\infty_C(Y):=\bigcup_{n\in\bbN}T^n_C(Y).$$ 
Note that if $C\cap Y=\emptyset$, then
$Y\subset T_C^\infty (Y)$, $C\cap T_C^\infty (Y)=\emptyset$ and
\begin{equation}\label{eq1}T_C(T^\infty_C(Y))=T^\infty_C(Y).\end{equation}
Indeed, if $v\in T_C(T^\infty_C(Y))$, then there exist $n\in\bbN$ and $u,u'\in T^n_C(Y)$ such that $v=\frac12(u+u')\notin C$, and hence $v\in T_C^{n+1}(Y)\subset T_C^\infty (Y)$.

\begin{lem}\label{lem1}
	Given $C\subset \bbR$, $Y\subset \bbR\setminus C$ and $v\in T^n_C(Y)$, there exists a function $d\colon 2^{\leq n}\rightarrow \bbR$ 
	such that \begin{enumerate}[label=\textnormal{(\Roman*)}]
		\itemsep1pt
				\item \label{item1} $d({\emptyset})=v$;
			\item \label{item2} $d(s)=\frac12(d(s^\frown 0)+d(s^\frown 1))$ for $s\in 2^{<n}$;
			\item \label{item3}  $d(s)\in Y$ for $s\in 2^n$;
		\item \label{item4} $d(s)\notin C$ for $s\in2^{\leq n}$.
	\end{enumerate}
Moreover, if $Y=\{x-b,x+b\}$ and $x\in C$, then $d(s)=x-b$ for all $s\in 2^{\leq n}$ or $d(s)=x+b$ for all $s\in 2^{\leq n}$. 
	\end{lem}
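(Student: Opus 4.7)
The natural approach is induction on $n$, essentially reading off the recursive structure of the definition of $T_C^n$.

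The base case $n=0$ is immediate: $v\in T_C^0(Y)=Y$, so setting $d(\emptyset)=v$ works, since (II) is vacuous, (III) holds by $v\in Y$, and (IV) holds because $Y\cap C=\emptyset$.

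For the inductive step, assume the lemma holds for $n$ and let $v\in T^{n+1}_C(Y)$. By definition of $T_C$, there exist $u,u'\in T^n_C(Y)$ with $v=\tfrac12(u+u')$ and $v\notin C$. Applying the inductive hypothesis to $u$ and $u'$, I obtain $d_0,d_1\colon 2^{\leq n}\to\bbR$ satisfying \ref{item1}--\ref{item4} with $v$ replaced by $u$ and $u'$ respectively. I then define $d\colon 2^{\leq n+1}\to\bbR$ by $d(\emptyset)=v$ and $d(i^\frown t)=d_i(t)$ for $i\in\{0,1\}$ and $t\in 2^{\leq n}$. The verification of \ref{item1}--\ref{item4} is a routine check: \ref{item2} at $s=\emptyset$ uses $v=\tfrac12(u+u')=\tfrac12(d_0(\emptyset)+d_1(\emptyset))$ and at $s=i^\frown t$ follows from the corresponding property of $d_i$; \ref{item3} at $s\in 2^{n+1}$ reduces to $d_i(t)\in Y$; \ref{item4} at $s=\emptyset$ is $v\notin C$, and at $s=i^\frown t$ is given by the hypothesis on $d_i$.

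For the moreover clause, first observe that when $Y=\{x-b,x+b\}$ and $x\in C$, one has $T_C(Y)\subset Y$: the only possible averages from $Y$ are $x-b$, $x+b$, and $x$, and the last is excluded since $x\in C$. Consequently $T^k_C(Y)\subset Y$ for every $k$, so $v\in\{x-b,x+b\}$. I then argue by the same induction: whichever of $x-b,x+b$ equals $v$, the representation $v=\tfrac12(u+u')$ with $u,u'\in\{x-b,x+b\}$ forces $u=u'=v$ (since any other pair averages to $x\in C$, contradicting $v\notin C$). The inductive hypothesis then yields that $d_0$ and $d_1$ are each identically equal to $v$, and hence so is $d$.

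The only delicate point is the indexing bookkeeping between $2^{\leq n}$ and $2^{\leq n+1}$ in the inductive step; once that is set up correctly, both parts of the lemma fall out without further difficulty.
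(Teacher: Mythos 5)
Your proof is correct and follows essentially the same route as the paper: induction on $n$, splitting $v=\frac12(u+u')$ with $u,u'\in T^n_C(Y)$ and gluing the two functions obtained from the inductive hypothesis. Your treatment of the ``moreover'' clause just spells out in more detail the paper's observation that $T^n_C(\{x-b,x+b\})=\{x-b,x+b\}$ when $x\in C$, which forces $d$ to be constant.
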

\begin{proof}
	Let $C\subset \bbR$ and $Y\subset \bbR\setminus C$. The proof is by induction on $n\in\bbN$. If $n=0$, there is nothing to do. Fix $n\in\bbN$ and assume the hypothesis is fulfilled for $n$.
	Fix $$v\in T^{n+1}_C(Y)=T_C(T^n_C(Y)).$$ By the definition of $T_C$ there exist $v_0,v_1\in T^n_C(Y)$ such that $v=\frac12(v_0+v_1)\notin C$. By induction hypothesis there exist functions $d_0,d_1\colon 2^{\leq n}\rightarrow \bbR$ such that $d_i(\emptyset)=v_i$ and conditions \ref{item2}--\ref{item4} with $d_i$ in place of $d$ are fulfilled.  Define $d\colon 2^{\leq n+1}\rightarrow\bbR$ such that $d(\emptyset)=v$, $d({0^\frown s})=d_0(s)$ and $d({1^\frown s})=d_1(s)$ for any $s\in 2^{\leq n}$.
	
	If $Y=\{x-b,x+b\}$ and $x\in C$, then $T^n_C(Y)=Y$. Thus the function $d$ must be constant.
\end{proof}
In particular, if $v\in T^n_C(Y)$, then $$v=\frac1{2^n}\sum_{s\in 2^n}d(s),$$ where $d\colon 2^{\leq n}\rightarrow \bbR$ is as in Lemma \ref{lem1}.
\section{Surjectivity of the center of distances operator}
Now we prove a theorem, which answers Filipczak's question. 
\begin{thm}
	\label{thm1}For any set $A\subset [0,\infty)$, where $0\in A$, there exists $X\subset \bbR$ such that $S(X)=A$.
	\end{thm}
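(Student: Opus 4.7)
My plan is to realize $X$ in the form $T^\infty_C(Y)$ from Section~2, for carefully chosen $Y\subset\bbR$ and $C\subset\bbR$. The seed $Y$ is designed so that every $y\in Y$ has, for each $a\in A$, a companion $y\pm a$ in $Y$ (which will force $A\subset S(X)$), while $C$ is chosen so that the iterated midpoint construction does not introduce spurious points whose presence would create distances in $S(X)$ outside of $A$.

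For the inclusion $A\subset S(X)$, the natural candidate seed is an orbit. Fixing a sign function $\sigma\colon A\setminus\{0\}\to\{-1,+1\}$ and a base point $x_0\in\bbR$, take
\[
  Y=\Bigl\{\,x_0+\sum_{a\in F}\sigma(a)\,a : F\subset A\setminus\{0\}\text{ finite}\,\Bigr\}.
\]
Then toggling the membership of $b$ in $F$ defines an involution on $Y$ realizing an $a$-companion of each $y\in Y$, so $A\subset S(Y)$. Since $Y\subset T^\infty_C(Y)$ and companions propagate along $T_C$---the midpoint of two points with coherent $a$-companions admits the midpoint of their companions as its own $a$-companion---one gets $A\subset S(T^\infty_C(Y))$.

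For the converse $S(X)\subset A$, I would argue by contradiction: suppose $b>0$, $b\notin A$, and $b\in S(X)$. Applying the condition at $x_0\in Y\subset X$ produces $v\in T^n_C(Y)$ with $|v-x_0|=b$ for some $n\in\bbN$, and Lemma~\ref{lem1} expresses $v$ as a dyadic average of $2^n$ elements of $Y$, all intermediate nodes lying outside $C$. I would then combine the ``moreover'' clause of Lemma~\ref{lem1}---applied to symmetric pairs around points of $C$, where it forces the tree labels to be constant and hence traps the distance to a prescribed value---with a $\mathbb{Q}$-linear coefficient comparison in the expansion $v-x_0=\pm b=\sum\sigma(a)a$, to deduce that $b$ must equal some element of $A$, the required contradiction.

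The main obstacle is to choose $Y$ and $C$ with enough algebraic rigidity for the last step to truly force $b\in A$ for \emph{arbitrary} $A\subset[0,\infty)$. When the elements of $A$ are $\mathbb{Q}$-linearly independent the orbit $Y$ is analyzable directly by coefficient comparison. When $A$ has $\mathbb{Q}$-linear relations (e.g.\ $A=[0,\infty)$ or $A=\mathbb{Q}\cap[0,1]$), distinct finite sums can collide and the naive argument collapses; circumventing this likely requires choosing $x_0$ to be $\mathbb{Q}$-linearly independent from $\operatorname{lin}_{\mathbb{Q}}(A)$ (or a Hamel-basis lift when $\operatorname{lin}_{\mathbb{Q}}(A)=\bbR$), so that distinct indexing sets $F$ give distinct points in the ambient space, and picking $C$ as the complement of $Y$ inside a suitable subgroup so that (\ref{eq1}) actually yields $T^\infty_C(Y)=Y$.
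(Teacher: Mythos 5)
Your construction takes $X=T^\infty_C(Y)$ itself, whereas the paper takes $X=\bbR\setminus T^\infty_C(Y)$, and this reversal is not cosmetic: it is where your argument breaks. The first gap is the claim that $A\subset S(T^\infty_C(Y))$ because ``companions propagate along $T_C$''. Propagation requires the two companions to be displaced in the \emph{same} direction, but the toggling involution on the orbit $Y$ does not provide this: the $a$-companion of $y$ is $y+\sigma(a)a$ or $y-\sigma(a)a$ according to whether $a$ lies in the indexing set $F$, so two points being averaged may have companions displaced by $+a$ and $-a$, whose midpoint is the original midpoint (distance $0$, not $a$). Already for $A=\{0,1\}$ and $C=\emptyset$ one gets $Y=\{x_0,x_0+1\}$ and $T^\infty_\emptyset(Y)=x_0+(\mathbb{Z}[\tfrac12]\cap[0,1])$, in which the point $x_0+\tfrac12$ has no companion at distance $1$; so $1\notin S(X)$ and the first inclusion fails. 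Repairing this by closing $Y$ under both $y\mapsto y+a$ and $y\mapsto y-a$ turns $Y$ into a coset of the group generated by $A$ and destroys the reverse inclusion (for $A=\{0,1\}$ it gives $S(x_0+\mathbb{Z})\supset\bbN$).

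The second gap is in the converse. To show $b\notin S(X)$ for $b\notin A$ you must exhibit \emph{some} point of $X$ with no companion at distance $b$; testing only $x_0$ shows at most that $\pm b$ is a dyadic-rational combination of elements of $A$, which does not force $b\in A$ (e.g.\ for $A=\{0\}\cup[1,2]$ the orbit already contains points at every distance $\geq 1$ from $x_0$). You would need a designated witness for each forbidden distance, and your $C$ is never pinned down to supply one --- your closing paragraph acknowledges this. The paper's mechanism resolves exactly these two problems simultaneously: since $T^\infty_C(Y)$ is closed under midpoints except at $C$, any $x\in X=\bbR\setminus T^\infty_C(Y)$ with $x\pm b\in T^\infty_C(Y)$ must itself lie in $C$; so the only candidate witnesses are the points $x_\alpha\in C$, one planted per forbidden distance $b_\alpha$ with $x_\alpha\pm b_\alpha\in Y$, and the Hamel-basis independence \eqref{eq2} together with Lemma \ref{lem1} shows $x_\alpha$ witnesses the failure of no distance other than $b_\alpha$. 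That complementation step is the missing idea.
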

\begin{proof}
Let $0\in A\subset [0,\infty)$ and consider an enumeration
	$$[0,\infty)\setminus A = \{b_\alpha\colon \alpha<\kappa\},$$
	where $\kappa=\operatorname{card}([0,\infty)\setminus A)\leq\mathfrak c$. Any Hamel basis of $\bbR$ over $\mathbb Q$ has cardinality $\mathfrak c$, so we inductively choose a transfinite sequence $(x_\alpha)_{\alpha<\kappa}$ of real numbers such that
	\begin{equation} \label{eq2} x_\alpha\notin \operatorname{lin}_{\mathbb Q}\left(\{b_\beta\colon \beta\leq \alpha\}\cup \{x_\beta\colon \beta<\alpha\}\right).\end{equation}
	Put $C:=\{x_\alpha\colon \alpha<\kappa\}$, $Y:=\bigcup_{\alpha<\kappa}\{x_\alpha+b_\alpha, x_\alpha-b_\alpha\}$ and
	$$X:=\bbR\setminus T_C^\infty\left(Y\right).$$

	 By definitions, $C\subset X$ and for each $\alpha<\kappa$ the number $x_\alpha\in X$ witnesses that $b_\alpha\notin S(X)$, therefore $$S(X)\subset [0,\infty)\setminus \{b_\alpha\colon \alpha<\kappa\}= A.$$

	To prove the opposite inclusion, fix $b\in [0,\infty)\setminus S(X)$. Then there exists $x\in X$ such that $x+b,x-b\notin X$.
	Since $$x\notin T_C^\infty(Y)=T_C(T_C^\infty(Y))$$ (recall formula \eqref{eq1}) and $x$ is the middle of points $x+b,x-b\in T_C^\infty(Y)$, it follows that $x\in C$, so $x=x_\alpha$ for some $\alpha<\kappa$.
	Apply Lemma \ref{lem1} to $x+b$ and $x-b$, so there exist functions $d_0,d_1\colon 2^{\leq n}\rightarrow \bbR$ such that $d_0(\emptyset)=x-b$, $d_1(\emptyset)=x+b$ and conditions \ref{item2}--\ref{item4} with $d_i$ in place of $d$ are fulfilled.
	
	By condition \ref{item3} (with $d$ replaced by $d_0$ and $d_1$) for each $s\in 2^n$ there exist $\alpha_s,\beta_s<\kappa$ and $\delta_s,\varepsilon_s\in\{-1,1\}$ such that
\begin{equation}\label{eq7}
d_0(s)=x_{\alpha_s}+\delta_s b_{\alpha_s}\in Y \text{ and } d_1(s)=x_{\beta_s}+\varepsilon_sb_{\beta_s}\in Y.
\end{equation} By conditions \ref{item1} and \ref{item2} we have
	\begin{equation}\label{eq6}2x_{\alpha}=x+b+x-b=\frac1{2^n}\sum_{s\in 2^n}\left(x_{\alpha_s}+x_{\beta_s}+\delta_s b_{\alpha_s}+\varepsilon_s b_{\beta_s}\right).\end{equation}
	Using condition \eqref{eq2}, we get
	$$\alpha=\alpha_s=\beta_s \text{ for all }s\in 2^n.$$ Otherwise, if $$\gamma=\max\left(\{\alpha\}\cup\{\alpha_s\colon s\in 2^n\}\cup\{\beta_s\colon s\in 2^n\}\right)$$ and we move all $x_\gamma$ to the left hand side and $x_\alpha$ to the right hand side if necessary, we get a contradiction with the choice of $x_\gamma$.

	We have $d_1(s)\in \{x_\alpha-b_\alpha,x_\alpha+b_\alpha\}$ for all $s\in 2^n$, hence by (the last part of) Lemma \ref{lem1}, we get $x+b=x+\varepsilon b_\alpha$ for some $\varepsilon\in\{-1,1\}$. Consequently $b=b_\alpha\notin A$.	
\end{proof}
\section{Bernstein sets and center of distances}

A \textit{Bernstein set} on the real line is a subset $B\subset \bbR$ such that both $B$ and its complement meet every uncountable closed subset of $\bbR$. Any uncountable closed set on the real line has cardinality $\mathfrak c$ and there are exactly continuum many such sets. 
Improving a classical construction of a Bernstein set, we get the following.

\begin{pro}\label{pro1}There exists a Bernstein set $X$ such that $S(X)=[0,\infty)$.
	\end{pro}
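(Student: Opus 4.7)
The plan is to construct $X:=\bbR\setminus Y$ where $Y$ is a midpoint-closed Bernstein set.  Midpoint-closure of $Y$, i.e.\ $T_\emptyset(Y)\subseteq Y$, gives $S(X)=[0,\infty)$ immediately: for any $x\in X$ and $a>0$, having both $x-a,x+a\in Y$ would force $x=\tfrac12((x-a)+(x+a))\in Y$, a contradiction, and $a=0$ is trivial.  The problem therefore reduces to building a midpoint-closed $Y\subset\bbR$ such that $Y$ and $\bbR\setminus Y$ each meet every uncountable closed subset of $\bbR$.

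Enumerating the uncountable closed sets as $\{F_\alpha:\alpha<\mathfrak c\}$, I would run a transfinite recursion maintaining disjoint sets $Y_\alpha$ (midpoint-closed, destined to lie in $Y$) and $Z_\alpha$ (destined to lie in $X$), each of cardinality $<\mathfrak c$.  At stage $\alpha$, pick $q_\alpha\in F_\alpha$ and put $Y_{\alpha+1}:=T^\infty_\emptyset(Y_\alpha\cup\{q_\alpha\})$, which is itself midpoint-closed by \eqref{eq1}; then pick $z_\alpha\in F_\alpha\setminus(Y_{\alpha+1}\cup Z_\alpha)$ and set $Z_{\alpha+1}:=Z_\alpha\cup\{z_\alpha\}$.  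The delicate point is that $q_\alpha$ must be chosen so that the enlarged midpoint closure $Y_{\alpha+1}$ does not swallow any element of $Z_\alpha$.

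The key step, which I expect to be the main obstacle, is the counting bound on the forbidden values of $q_\alpha$.  By Lemma~\ref{lem1} applied with $C=\emptyset$, any $z\in T^\infty_\emptyset(Y_\alpha\cup\{q\})$ has the form $z=\frac{1}{2^n}\sum_{s\in 2^n}d(s)$ with $d(s)\in Y_\alpha\cup\{q\}$; writing $k:=|\{s\in 2^n:d(s)=q\}|$, the case $k=0$ forces $z\in T^\infty_\emptyset(Y_\alpha)=Y_\alpha$ by midpoint-closure of $Y_\alpha$, contradicting $Y_\alpha\cap Z_\alpha=\emptyset$, while the case $k\geq 1$ solves uniquely for $q$ as a rational combination of a finite tuple from $Y_\alpha\cup Z_\alpha$.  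Hence the set of contaminating $q$'s has cardinality at most $\aleph_0\cdot\max(|Y_\alpha|,|Z_\alpha|,\aleph_0)<\mathfrak c$, and since $|F_\alpha|=\mathfrak c$ a legal $q_\alpha\in F_\alpha$ exists.  Without the explicit average formula from Lemma~\ref{lem1}, parameterizing the forbidden set by finitely many pieces of data would not be available.

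Finally, $Y:=\bigcup_{\alpha<\mathfrak c}Y_\alpha$ is midpoint-closed because the chain $(Y_\alpha)$ is increasing, so any two elements of $Y$ lie in some common midpoint-closed $Y_\alpha$; the inductive disjointness also survives at limit stages by the same argument.  Setting $X:=\bbR\setminus Y$, the points $q_\alpha\in F_\alpha\cap Y$ and $z_\alpha\in F_\alpha\cap X$ witness that $X$ is Bernstein, and the first paragraph delivers $S(X)=[0,\infty)$.
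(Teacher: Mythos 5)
Your proof is correct. Structurally it is parallel to the paper's: both run a transfinite recursion over an enumeration $\{F_\alpha\colon\alpha<\mathfrak c\}$ of the uncountable closed sets, put one point of each $F_\alpha$ into a midpoint-closed set $Y$ and reserve another point of $F_\alpha$ for $X:=\bbR\setminus Y$, and then read off $S(X)=[0,\infty)$ from midpoint-closure exactly as in your first paragraph. The difference is the device used to keep the reserved points out of the closure. The paper chooses all points $x_\alpha,y_\alpha$ to be $\mathbb Q$-linearly independent of everything chosen before; then $T^\infty_\emptyset(\{y_\alpha\colon\alpha<\mathfrak c\})\subset\operatorname{lin}_{\mathbb Q}(\{y_\alpha\colon\alpha<\mathfrak c\})$ automatically misses every $x_\alpha$, so no stage-by-stage closure or disjointness bookkeeping is needed, and the same independence trick is recycled in Theorem~\ref{thm2}. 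You instead take the midpoint closure incrementally and rule out the bad choices of $q_\alpha$ by counting: via the average formula of Lemma~\ref{lem1}, a contamination $z\in Z_\alpha\cap T^\infty_\emptyset(Y_\alpha\cup\{q\})$ with $k\geq 1$ occurrences of $q$ among the leaves determines $q=\frac1k\bigl(2^nz-\sum d(s)\bigr)$ from countably much finite data over $Y_\alpha\cup Z_\alpha$, so fewer than $\mathfrak c$ values of $q$ are forbidden (here you correctly rely on $|Y_\alpha|,|Z_\alpha|<\mathfrak c$, which the recursion preserves since $|T^\infty_\emptyset(W)|\leq\max(|W|,\aleph_0)$). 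Your route is more elementary in that it needs no linear-independence or Hamel-basis input, at the cost of the limit-stage maintenance and the counting lemma; the paper's is shorter and generalizes more directly to the proof of Theorem~\ref{thm2}, where the independence condition also has to control the translates $x_\alpha\pm b_\alpha$.
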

\begin{proof}	Let $\{F_\alpha\colon \alpha<\mathfrak{c}\}$ be an enumeration of all uncountable closed subsets of $\bbR$. Choose sequences $(x_\alpha)_{\alpha<\mathfrak{c}}$, $(y_\alpha)_{\alpha<\mathfrak{c}}$ of real numbers such that
	$$x_\alpha\in F_\alpha\setminus \operatorname{lin}_{\mathbb Q}(\{x_\beta\colon \beta< \alpha\}\cup\{y_\beta\colon \beta< \alpha\}),$$
	$$y_\alpha\in F_\alpha\setminus \operatorname{lin}_{\mathbb Q}(\{x_\beta\colon \beta\leq \alpha\}\cup\{y_\beta\colon \beta< \alpha\}).$$
	Put $Y:=\{y_\alpha\colon \alpha <\mathfrak{c}\}$ and $X=\bbR\setminus T^\infty_\emptyset(Y)$. If $a,b\in T_\emptyset^\infty(Y)$, then $\frac12(a+b)\in T_\emptyset^\infty(Y)$, hence $S(X)=[0,\infty)$.
	Also $Y\subset T^\infty_\emptyset(Y)$ and $$T_\emptyset^\infty(Y)\cap \{x_\alpha\colon \alpha<\mathfrak{c}\}\subset\operatorname{lin}_{\mathbb Q}(Y)\cap \{x_\alpha\colon \alpha<\mathfrak{c}\}=\emptyset,$$  so $X\supset \{x_\alpha\colon \alpha<\mathfrak c\}$ is a Bernstein set.
	\end{proof}
Modifying the proof of Theorem \ref{thm1}, we get a final theorem.

\begin{thm}
	\label{thm2}If $0\in A\subset [0,\infty)$, then there exists a Bernstein set $X\subset \bbR$ such that $S(X)=A$.
\end{thm}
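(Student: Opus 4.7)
The plan is to merge the transfinite construction behind Theorem~\ref{thm1} with the Bernstein-style enumeration used in Proposition~\ref{pro1}. I would enumerate $[0,\infty)\setminus A=\{b_\alpha:\alpha<\kappa\}$ with $\kappa\leq\mathfrak{c}$ and fix an enumeration $\{F_\alpha:\alpha<\mathfrak{c}\}$ of the uncountable closed subsets of $\bbR$. Then by transfinite recursion on $\alpha<\mathfrak{c}$ I would choose two points $x_\alpha,y_\alpha\in F_\alpha$ subject to linear-independence conditions over $\mathbb{Q}$: $x_\alpha$ outside $\operatorname{lin}_{\mathbb{Q}}(\{b_\beta:\beta\leq\alpha,\ \beta<\kappa\}\cup\{x_\beta,y_\beta:\beta<\alpha\})$, and $y_\alpha$ outside the same span enlarged by $\{x_\alpha\}$. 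Each forbidden span has cardinality strictly less than $\mathfrak{c}$ while $|F_\alpha|=\mathfrak{c}$, so these choices are available.

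Setting $C:=\{x_\alpha:\alpha<\mathfrak{c}\}$, $Y:=\{y_\alpha:\alpha<\mathfrak{c}\}\cup\bigcup_{\alpha<\kappa}\{x_\alpha-b_\alpha,x_\alpha+b_\alpha\}$, and $X:=\bbR\setminus T_C^\infty(Y)$, a routine check from the independence conditions (together with $b_\alpha\neq 0$, which holds because $0\in A$) gives $C\cap Y=\emptyset$, whence $C\subset X$ and $Y\subset\bbR\setminus X$. The Bernstein property then follows at once from $x_\alpha\in X\cap F_\alpha$ and $y_\alpha\in(\bbR\setminus X)\cap F_\alpha$ for every $\alpha<\mathfrak{c}$. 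The inclusion $S(X)\subset A$ is also immediate as in Theorem~\ref{thm1}: for each $\alpha<\kappa$ the point $x_\alpha\in X$ witnesses $b_\alpha\notin S(X)$ via $x_\alpha\pm b_\alpha\in Y$.

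The main obstacle will be the reverse inclusion $A\subset S(X)$, where the extra generators $y_\beta\in Y$ complicate the bookkeeping from Theorem~\ref{thm1}. Following that template, I would fix $b\in[0,\infty)\setminus S(X)$, find $x\in X$ with $x\pm b\in T_C^\infty(Y)$, conclude via \eqref{eq1} that $x=x_\alpha\in C$ for some $\alpha$, and apply Lemma~\ref{lem1} to both $x-b$ and $x+b$ to reach
\begin{equation*}
2x_\alpha=\frac{1}{2^n}\sum_{s\in 2^n}\bigl(d_0(s)+d_1(s)\bigr),
\end{equation*}
where each leaf $d_i(s)\in Y$ is either $x_\beta+\delta b_\beta$ or $y_\beta$. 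Letting $\gamma$ be the largest index appearing on either side, the plan is: if $\gamma>\alpha$, isolating whichever of $y_\gamma$ or $x_\gamma$ actually occurs on the right expresses that generator inside its own forbidden $\mathbb{Q}$-span, a contradiction; hence $\gamma=\alpha$. An analogous move applied to $y_\alpha$ rules out any $y_\alpha$-leaf, and then the coefficient-of-$x_\alpha$ count from the proof of Theorem~\ref{thm1} (which used the independence of $x_\alpha$) forces every remaining leaf to lie in $\{x_\alpha-b_\alpha,x_\alpha+b_\alpha\}$. The constancy argument from the moreover-clause of Lemma~\ref{lem1}---using $x_\alpha\in C$ to forbid $x_\alpha$ at internal nodes---then makes $d_0,d_1$ constant, giving $b=b_\alpha\notin A$, as required.
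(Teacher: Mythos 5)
Your plan is correct and follows essentially the same route as the paper's proof: the same sets $C$, $Y$ and $X=\bbR\setminus T_C^\infty(Y)$, the Bernstein property from the pair $x_\alpha,y_\alpha\in F_\alpha$, and the same reverse-inclusion argument via Lemma \ref{lem1} (maximal index $\gamma$, exclusion of $y$-leaves, the coefficient-of-$x_\alpha$ count, and the constancy step). The only, harmless, difference is that you keep $\kappa\leq\mathfrak c$ and treat $A=[0,\infty)$ uniformly, whereas the paper pads the enumeration of $[0,\infty)\setminus A$ to length $\mathfrak c$ and disposes of the case $A=[0,\infty)$ separately via Proposition \ref{pro1}.
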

\begin{proof}
If $A=[0,\infty)$, then the result follows from Proposition \ref{pro1}.		
Assume $0\in A\subsetneq[0,\infty)$ and consider an enumeration $$\emptyset\neq[0,\infty)\setminus A = \{b_\alpha\colon \alpha<\mathfrak{c}\},$$
	where terms $b_\alpha$ are repeated if necessary.
Let $\{F_\alpha\colon \alpha<\mathfrak{c}\}$ be an enumeration of all uncountable closed subsets of $\bbR$.
	 Inductively choose transfinite sequences $(x_\alpha)_{\alpha<\mathfrak{c}}$, $(y_\alpha)_{\alpha<\mathfrak{c}}$ such that
	\begin{equation}\label{eq3}x_\alpha\in F_\alpha\setminus\operatorname{lin}_{\mathbb Q}\left(\{b_\beta\colon \beta\leq \alpha\}\cup \{x_\beta\colon \beta<\alpha\}\cup\{y_\beta\colon \beta<\alpha\}\right),\end{equation}
	\begin{equation}\label{eq4}y_\alpha\in F_\alpha\setminus\operatorname{lin}_{\mathbb Q}\left(\{b_\beta\colon \beta\leq \alpha\}\cup \{x_\beta\colon \beta\leq\alpha\}\cup\{y_\beta\colon \beta<\alpha\}\right).\end{equation}
	Put  $C:=\{x_\alpha\colon \alpha<\mathfrak c\}$, $Y:=\bigcup_{\alpha<\mathfrak c}\{y_\alpha,x_\alpha+b_\alpha, x_\alpha-b_\alpha\}$ and
	$$X:=\bbR\setminus T_C^\infty\left(Y\right).$$
	
	Since  $C\subset X$  and $\{y_\alpha\colon \alpha<\mathfrak{c}\}\subset T_C^\infty(Y)$, so $X$ is a Bernstein set.
For each $\alpha<\mathfrak c$ the number $x_\alpha\in X$ witnesses that $b_\alpha\notin S(X)$, therefore $S(X)\subset A$.

Fix $b\in [0,\infty)\setminus S(X)$. As in the proof of Theorem \ref{thm1}, check that there exists $\alpha<\mathfrak c$ such that $x_\alpha+b, x_\alpha-b \in T^\infty_C(Y)$ and there exist functions $d_0,d_1\colon 2^{\leq n}\rightarrow \bbR$ such that $d_0(\emptyset)=x_\alpha-b$, $d_1(\emptyset)=x_\alpha+b$ and conditions \ref{item2}--\ref{item4} with $d_i$ in place of $d$ are fulfilled. We have
\begin{equation}\label{eq5}2x_\alpha=\frac1{2^n}\sum_{s\in 2^n}(d_0(s)+d_1(s))\end{equation}
and, by condition \ref{item3}, for each $s\in 2^n$ there are some $\alpha_s,\beta_s<\mathfrak{c}$ such that $$d_0(s)\in \{y_{\alpha_s},x_{\alpha_s}+b_{\alpha_s}, x_{\alpha_s}-b_{\alpha_s}\} \text{ and }d_1(s)\in \{y_{\beta_s},x_{\beta_s}+b_{\beta_s}, x_{\beta_s}-b_{\beta_s}\}.$$ Let
$$\gamma:=\max\left(\{\alpha\}\cup\{\alpha_s\colon s\in 2^n\}\cup\{\beta_s\colon s\in 2^n\}\right).$$
Check that, by \eqref{eq4} and \eqref{eq5}, if $d_0(s)=y_{\alpha_s}$, then $\alpha_s<\gamma$, and similarly, if $d_1(s)=y_{\beta_s}$, then $\beta_s<\gamma$. It follows that $\gamma=\alpha$, because otherwise $\gamma=\alpha_s$ or $\gamma=\beta_s$ for some $s\in 2^n$, and $d_0(s)=x_{\gamma}\pm b_{\gamma}$ or $d_1(s)=x_{\gamma}\pm b_{\gamma}$, which together with \eqref{eq5} would contradict \eqref{eq3}. Finally, again by \eqref{eq3}, $d_0(s)\neq y_{\alpha_s}$, $d_1(s)\neq y_{\beta_s}$ and $\alpha=\alpha_s=\beta_s$  for all $s\in 2^n$ (because otherwise the rational coefficient at $x_{\alpha}$ in \eqref{eq5} would be non-zero). 

Further reasoning proceeds as in the proof of Theorem \ref{thm1}, that is, since $d_1(s)\in \{x_\alpha-b_\alpha,x_\alpha+b_\alpha\}$ for all $s\in 2^n$, we obtain $b=b_\alpha\notin A$.
\end{proof}

\section*{Acknowledgments}
The author thanks the organisers of the 6th Workshop on  Real Analysis in Będlewo (2024), where he heard about the problem, especially Szymon Głąb for a fruitful discussion.


\begin{thebibliography}{0}
	\bibitem{Banakiewicz}M. Banakiewicz. \textit{The center of distances of central Cantor sets}.
	Results Math.   78 (2023), no. 6, Paper No. 234, 14 pp.
	\bibitem{bbfpw}M. Banakiewicz, A. Bartoszewicz, M. Filipczak, F. Prus-Wiśniowski, \textit{Center of distances and central Cantor sets},
	Results Math. 77 (2022), no. 5, Paper No. 196, 20 pp.
	\bibitem{bfhlpw} A. Bartoszewicz, M. Filipczak, G. Horbaczewska, S. Lindner, F. Prus-Wiśniowski.
	\textit{On the operator of center of distances between the spaces of closed subsets of the real line}.
	Topol. Methods Nonlinear Anal.   63 (2024), no. 2, 413--427.
	\bibitem{bgm} A. Bartoszewicz, Sz. G\l \k{a}b, J. Marchwicki.
	\textit{Recovering a purely atomic finite measure from its range}.
	J. Math. Anal. Appl.   467 (2018), no. 2, 825--841.
	\bibitem{bpw} W. Bielas, Sz. Plewik, M. Walczy{\'n}ska. \textit{On the center of distances}.
	Eur. J. Math.   4 (2018), no. 2, 687--698.
%	\bibitem{kn}M. Kula, P. Nowakowski, \textit{Achievement sets of series in $\mathbb R^2$},
%	Results Math. 79 (2024), no. 6, Paper No. 221, 24 pp.
\end{thebibliography}
\end{document}